
%

\documentclass[11pt]{amsart}
 \usepackage{amssymb}
\usepackage{amsthm}
\usepackage{xspace}
\usepackage{amsmath}
\usepackage{setspace}
\usepackage{amscd}
\usepackage{tikz}
\usepackage{setspace}
\usepackage{enumerate}
\usepackage{graphicx}

\makeatletter
\@namedef{subjclassname@2010}{%
  \textup{2010} Mathematics Subject Classification}
\makeatother

\newtheorem{theorem}{Theorem}[section]
\newtheorem{lemma}[theorem]{Lemma}
\newtheorem{proposition}[theorem]{Proposition}
\theoremstyle{definition}

\theoremstyle{remark}

\numberwithin{equation}{section}
 


\newcommand{\Z}{\mbox{$\mathbb{Z}$}}

\newcommand{\Q}{\mbox{$\mathbb{Q}$}}

\begin{document}

\title{ Primes  of the form $x^2+dy^2$  with  \\ $x\equiv 0\pmod{N}$ or $y\equiv 0\pmod{N}$ }


\author{Ambedkar Dukkipati}
\address{Department of Computer Science and Automation,\\ Indian Institute of  Science,
Bangalore, India. }
\email{ ad@csa.iisc.ernet.in}

\author{Sushma Palimar}\thanks{}
\address{Department of Computer Science and Automation,\\ 
Indian Institute of  Science,
Bangalore, India.}

 \email{sushma@csa.iisc.ernet.in, sushmapalimar@gmail.com.}





 \begin{abstract}
In this paper we charatcterize primes of the form $x^2+dy^2$  with  $x\equiv 0\pmod{N}$ or
 $y\equiv 0\pmod{N}$ for positive integer $N$ and $d$ with $d$ being square free.
\end{abstract}

\subjclass[2010]{11R04,11R11,11R29,11R37. }


\keywords{Artin reciprocity, cyclic quartic unramified extension, Gaussian Mersenne prime, Eisenstein Mersenne prime.}

\maketitle
   
\section{Introduction}	
The problem of representing primes in the form $x^2+dy^2$ is ancient, 
deep and is    prelude to class field theory.
 For $d=1$, we have Fermat's  celebrated theorem in number theory, 
 which states that, for 
\[p>2 \quad p=x^2+y^2 \Leftrightarrow p\equiv 1\pmod{4}.\] 
This lead many mathematicians to work on primes of the form $x^2+dy^2$ for
$d = 2,3,5,7$ and so on. Cox \cite{Cox:1989:pfct}  solved this problem   
using the techniques from class field theory. 
Cox's result is as follows: 
 let $n$ be a positive integer, $p$ an odd prime relatively prime to $d$ and
 let $\mathcal{O}=\Z[\sqrt{-n}]$ be the order in 
 $K=\Q(\sqrt{-n})$. 
 Then \(p=x^2+ny^2 \text{ if and only if  } p\){ splits completely in the ring class field }$K_{\mathcal{O}}.$
 Let $f_{n}(x)\in \Z[X]$ be the minimal polynomial of a real algebraic integer which
 generates $K_{\mathcal{O}}$ over $K$, and $p$ is relatively prime to the discriminant of $f_n(x)$.
 Then \(p=x^2+ny^2 \text{ if and only if } \left(\frac{-n}{p}\right)=1 \text{ and } 
 f_n(x)\equiv 0\pmod {p} \text{ has an integer solution}. \)
 A finite extension $L$ of  $K$ is called unramified (at the finite primes) if $\mathrm{disc}(L/K)=1$; 
 and abelian if $L/K$ is normal  with abelian Galois group.
 The maximal unramified abelian extension $H$ of 
$K$ is finite and is called the Hilbert class field of $K$. By Artin's reciprocity law, we have 
$\mathrm{Gal}(H/K)=\mathrm{Cl}(K)$, where $\mathrm{Cl}(K)$ is the class group of $K$ and $\mathrm{Gal}(H/K)$ 
is the Galois group
of $H$ over $K$. Let $L/K$ be a Galois extension of number fields, with $G=\mathrm{Gal}(L/K)$.
Let $I_{K}(\Delta(L/K)$ be the group of fractional ideals relatively prime to the discriminant $\Delta(L/K)$
so that every prime ideal in $I_{K}(\Delta(L/K)$ is unramified in $L$.
Then we have the Artin map \cite{lan:1994:ant},
\(I_{K}(\Delta(L/K)\mapsto \mathrm{Gal}(L/K), \text{  which is a surjective homomorphism.}\)
Let $m$ be a modulus of a number field $K$. We define $P_{K,1}$ to be the subgroup of $I_{K}(m)$
generated by the principal ideals $\alpha\cdot\mathcal{O}_{K}$, with $\alpha\in \mathcal{O}_{K}$, satisfying
$\alpha\equiv1\pmod{m}$ and $\sigma(\alpha)>0$ for all infinite primes $\sigma$ of $K$.
We define $\mathrm{Cl}_{m}(K)$ to be the group $I_{K}(m)/P_{K,1}(m)$.
Let $m$ be a modulus divisible by all primes which ramify in $L$. Then the kernel of the 
Artin map $I_{K}(m)\mapsto \mathrm{Gal}(L/K)$ contains $P_{K,1}(m)$ if and only if $f|m$, where $f$ is the conductor of $L/K$
\cite{Cox:1989:pfct}.

A Mersenne prime $M_p=2^p-1$ is a quadratic residue of $7$ if and only if $p\equiv 1\bmod{3}$.
 In such cases $M_p\equiv 1\bmod{7}$ and clearly
$2^p-1=x^2+7y^2$ for some integers  $x$ and $y$.  Lemmermeyer made an observation that,  
 $x$ is divisible by $8$ and $y$ leaves the remainder $\pm 3\bmod{8}$, which was later proved by Lenstra and
 Stevenhagen in   \cite{henstev:2000:artinreci} using Artin reciprocity law.
    
The notion of Mersenne primes in the rings of integers of real quadratic fields
 $\Q(\sqrt{d})$, defined by $M_{p,\alpha}=\frac{{\alpha}^{p-1}}{\alpha-1}$, for an 
 irreducible $\alpha\in \Q(\sqrt{d})$ such that
 $\alpha-1$ is a unit is considered in \cite{pali:2012}. 
 Then $M_{p,\alpha}$ may be called an analog of Mersenne prime if the  norm of
 $M_{p,\alpha}$ is a rational prime.
With respect to the above condition on $\alpha$  we get only two  choices for $\alpha$, namely, $2+\sqrt{2}$ and $\sqrt{2}$. 
 For $\alpha=2+\sqrt{2}\in \Q(\sqrt{2})$,
 if the norm of $M_{p,\alpha}$ is a rational prime, then $M_{p,\alpha}$ is a quadratic residue of $7$. 
 Then it is proved that,
 $\text{ for all }p\equiv\pm1\pmod{6}$ we have $M_{p,\alpha}=x^2+7y^2$ and $x$ part is divisible by $8$.

 The objective of the paper is as follows: when does a positive integer $N$ divide either $x$ or $y$
in the representation of primes as $x^2+dy^2$. 
We consider Gaussian Mersenne primes and Eisenstein Mersenne primes to illustrate the result.
 For $p\equiv \pm 1\pmod{3}$, Gaussian Mersenne primes are represented in the form $x^2+7y^2$, then 
 we show that $x\equiv \pm 1\pmod{8}$ and $y\equiv 0\pmod{8}$. 
 Also, for $p\equiv 1\pmod{6}$ Eisenstein Mersenne primes are represented in the form 
 $x^{2}+3y^{2}$,  and $x$ leaves the remainder
 $6\pmod{7}$  and $y$ is divisible by $7$. 
 \section{Primes of the form $x^2+dy^2$ }
It is well known that,  primes represented by $x^2+dy^2$ split completely in $K=\Q(\sqrt{-d})$.
Then, for a given $N$, we see whether these primes split completely in the quartic extension $H_4$
of $\Q(\sqrt{-Nd})$. The existence of $H_4$ containing $\sqrt{N}$ and $\sqrt{-d}$ is guaranteed 
from  \cite{vau:1985:cyclqua}. Given $m$, we can find infinitely many square-free integers
$n$ such that $gcd(m,n)=1$ and $\Q(\sqrt{nm})$
has a cyclic unramified extension of degree $4$ containing $\Q(\sqrt{n})$ \cite{vau:1985:cyclqua}.
Then we see that, in the representation of primes as $x^2+dy^2$ if $N$ divides either $x$ or $y$,
then these primes indeed split in the unramified cyclic quartic extension of $\Q(\sqrt{-Nd})$.

Let $\alpha\in \Q(\sqrt{m})$ with its conjugate $\alpha^{\prime}$ and $\alpha\alpha^{\prime}=nk^2=a^2-mb^2$
and $n\neq 1,m$.
Then $\Q(\sqrt{m},\sqrt{\alpha})$ is a field of degree $4$, which is not normal over $\Q$. Its normal closure 
is $H_4=\Q(\sqrt{m},\sqrt{n},\sqrt{\alpha})=\Q(\sqrt{m},\sqrt{\alpha},\sqrt{\alpha^{\prime}})$, which
 has degree 8 and is normal over $\Q$ with dihedral Galois group and $H_4$ contains $3$ quadratic subfields 
 $\Q(\sqrt{m})$, $\Q(\sqrt{n})$ and $\Q(\sqrt{mn})$.
 
 \begin{theorem}\label{vau1}\cite{vau:1985:cyclqua}
  Let $k\in \Z$ be square-free. If there is a field $K$ of degree $8$, normal over $\Q$,
  and unramified over $\Q(\sqrt{k})$ with dihedral Galois group, then 
  \begin{enumerate}
   \item $k=mn$, $m\neq1$, $n\neq1$ and $m\equiv 1\pmod{4}$, and
   \item every prime factor of $n$ is a splitting prime in $\Q(\sqrt{m})$ (and vice versa)
  \end{enumerate}
\end{theorem}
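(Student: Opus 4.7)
The plan is to extract conditions (1) and (2) from the $D_4$-Galois structure together with the unramifiedness of $K/\Q(\sqrt{k})$.

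First I would identify the quadratic subfields of $K$ using the subgroup lattice of $D_4 = \langle \sigma,\tau \mid \sigma^4=\tau^2=1,\; \tau\sigma\tau=\sigma^{-1}\rangle$. There are exactly three index-two subgroups: the cyclic $\langle \sigma\rangle$ and the two Klein four-groups $V_1=\langle \sigma^2,\tau\rangle$, $V_2=\langle \sigma^2,\sigma\tau\rangle$. Their fixed fields are the three quadratic subfields of $K$. Interpreting the ``cyclic quartic'' hypothesis in the way the rest of the paper uses Theorem \ref{vau1}, $\Q(\sqrt{k})$ is the fixed field of $\langle\sigma\rangle$, and the other two quadratic subfields I label $\Q(\sqrt{m})$ and $\Q(\sqrt{n})$. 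The biquadratic compositum $L=\Q(\sqrt{m},\sqrt{n})$ is then the fixed field of the center $\langle\sigma^2\rangle$, and comparing the three quadratic subfields inside $L$ forces the relation $k\equiv mn\pmod{(\Q^\times)^2}$. Taking $k$ square-free lets me write $k=mn$ with $m,n$ coprime and square-free, and the distinctness of the three quadratic subfields already gives $m\neq 1$ and $n\neq 1$.

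Next I would derive the congruence $m\equiv 1\pmod 4$ from ramification in the tower $\Q(\sqrt{k})\subset L\subset K$. Since $K/\Q(\sqrt{k})$ is unramified, so is $L/\Q(\sqrt{k})$. At odd primes, coprimality of $m$ and $n$ already forces $L/\Q(\sqrt{k})$ to be unramified; the real content is at the prime $2$. A case analysis on $(m\bmod 4,\,n\bmod 4)$, comparing the local discriminants of $\Q(\sqrt{m})$, $\Q(\sqrt{n})$, $\Q(\sqrt{k})$ via the standard formula ($\mathrm{disc}=d$ or $4d$ according as $d\equiv 1$ or $d\not\equiv 1\pmod 4$), shows that the only configurations compatible with $L/\Q(\sqrt{k})$ being unramified at $2$ are those in which at least one of $m,n$ is $\equiv 1\pmod 4$; relabelling gives $m\equiv 1\pmod 4$.

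For condition (2), fix a prime $p\mid n$ and study its decomposition in $K/\Q$. Since $p\nmid m$, $p$ is unramified in $\Q(\sqrt{m})$, so the inertia group $I\subset D_4$ at a prime above $p$ is contained in $\mathrm{Gal}(K/\Q(\sqrt{m}))=V_1$. On the other hand $p\mid k$, hence ramifies in $\Q(\sqrt{k})$, and since $K/\Q(\sqrt{k})$ is unramified, $I$ maps isomorphically onto $\mathrm{Gal}(\Q(\sqrt{k})/\Q)$; in particular $I\not\subset\langle\sigma\rangle$. The only order-two subgroups of $V_1$ with this property are $\langle\tau\rangle$ and $\langle\sigma^2\tau\rangle$, both non-normal in $D_4$. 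The decomposition group $D\supseteq I$ must normalize $I$, so $D\neq D_4$; the only proper subgroup of $D_4$ containing either candidate for $I$ is $V_1$ itself, hence $D\subseteq V_1=\mathrm{Gal}(K/\Q(\sqrt{m}))$. Therefore the Frobenius fixes $\Q(\sqrt{m})$, i.e.\ $p$ splits in $\Q(\sqrt{m})$. Swapping the roles of $m$ and $n$ gives the ``vice versa''. The prime $p=2$ (relevant when $2\mid n$) requires handling by the same local argument, using the Kummer-theoretic criterion ``unit-square mod $4$'' for quadratic unramifiedness at a dyadic place.

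I expect the main obstacle to be the dyadic ramification analysis underpinning (1): carefully matching the $2$-adic discriminants of $\Q(\sqrt{m})$, $\Q(\sqrt{n})$, $\Q(\sqrt{k})$, and of their compositum $L$, across all residue classes of $m,n\bmod 4$ (and, for the endpoint of (2), $\bmod 8$), so as to pin down precisely which congruence on $m$ the unramifiedness $K/\Q(\sqrt{k})$ enforces. By contrast, the Galois-theoretic inertia argument for (2) is clean as soon as the key observation that the inertia at a ramified $p\mid n$ is a non-normal order-two subgroup of $D_4$ is in place.
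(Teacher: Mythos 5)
The paper offers no proof of this statement: it is imported verbatim from Vaughan's article \cite{vau:1985:cyclqua}, so there is no in-paper argument to compare yours against. Judged on its own, your sketch is the standard (and correct) ramification-theoretic proof: unramifiedness of $K/\Q(\sqrt{k})$ forces every inertia subgroup $I\subset D_4$ to meet $\langle\sigma\rangle=\mathrm{Gal}(K/\Q(\sqrt{k}))$ trivially, hence $|I|\le 2$, and your normalizer argument for (2) and the conductor--discriminant bookkeeping for the congruence in (1) both go through. One step you assert rather than prove is that $m$ and $n$ may be taken coprime, i.e.\ that $k$ equals $mn$ on the nose and not merely up to squares: this also needs the unramifiedness, since a prime $p\mid\gcd(m,n)$ would ramify in both $\Q(\sqrt{m})$ and $\Q(\sqrt{n})$, forcing $I\not\subset V_1$ and $I\not\subset V_2$, which is impossible because every subgroup of $D_4$ of order at most $2$ lies in $V_1$ or in $V_2$. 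With that line added, and the dyadic comparison you already flag carried out explicitly (it reduces to checking $\mathrm{disc}(\Q(\sqrt{m}))\,\mathrm{disc}(\Q(\sqrt{n}))=\mathrm{disc}(\Q(\sqrt{k}))$, which holds exactly when at least one of $m,n$ is $\equiv 1\pmod{4}$), the argument is complete.
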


\begin{theorem}\label{vau2} \cite{vau:1985:cyclqua} Let $m\equiv 2\pmod{4}$ and suppose $a^2-mb^2=nk^2\equiv 1\pmod{4}$
 (where $n$ is square free.)
 If $(n,m)=1,$ then $\Q(\sqrt{nm})$ has a cyclic unramified extension of degree 4 over $\Q(\sqrt{nm})$ containing 
 $\Q(\sqrt{m})$.
\end{theorem}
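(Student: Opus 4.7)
The plan is to take as the desired field the octic normal closure $H_4 = \Q(\sqrt{m}, \sqrt{\alpha}, \sqrt{\alpha'})$ introduced immediately before Theorem~\ref{vau1}, and to verify two claims about it: (i) $\mathrm{Gal}(H_4/\Q(\sqrt{mn}))$ is cyclic of order $4$, and (ii) $H_4/\Q(\sqrt{mn})$ is unramified at every finite prime. The inclusion $\Q(\sqrt{m}) \subset H_4$ is built into the construction.

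For (i), I would first extract arithmetic content from the hypothesis $nk^2 \equiv 1 \pmod 4$: it forces $k$ odd and $n \equiv 1 \pmod 4$, so together with $m \equiv 2 \pmod 4$ and $\gcd(m,n) = 1$ we land in the setting of Theorem~\ref{vau1}. After absorbing common odd prime factors of $a$ and $b$ into $k$ (using squarefreeness of $n$) we may assume $\gcd(a,b) = 1$, and then the identity $a^2 - mb^2 = nk^2$ yields $\left(\tfrac{m}{p}\right) = 1$ for each odd $p \mid n$ and $\left(\tfrac{n}{p}\right) = 1$ for each odd $p \mid m$. Among the eight $\Q$-automorphisms of $H_4$ I would then single out the element $\sigma$ defined by $\sigma(\sqrt{m}) = -\sqrt{m}$, $\sigma(\sqrt{\alpha}) = \sqrt{\alpha'}$, $\sigma(\sqrt{\alpha'}) = -\sqrt{\alpha}$. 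A direct check gives $\sigma^2(\sqrt{\alpha}) = -\sqrt{\alpha}$, so $\sigma$ has order $4$, while $\sigma(\sqrt{n}) = \sigma(\sqrt{\alpha}\sqrt{\alpha'}/k) = -\sqrt{n}$ and hence $\sigma(\sqrt{mn}) = \sqrt{mn}$. Since $[H_4 : \Q(\sqrt{mn})] = 4$, the subgroup $\langle \sigma \rangle$ exhausts the relative Galois group, proving (i).

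For (ii), I would work through the tower $\Q(\sqrt{mn}) \subset \Q(\sqrt{m}, \sqrt{n}) \subset H_4$. At an odd rational prime $p \mid m$, $p$ ramifies in $\Q(\sqrt{mn})$ as $\mathfrak{p}^2$ and ramifies in $\Q(\sqrt{m})$, while splitting in $\Q(\sqrt{n})$ by the preceding paragraph; tracking ramification indices across the biquadratic field forces $\mathfrak{p}$ to split upstairs and hence be unramified. The case $p \mid n$ is symmetric, and $p = 2$ reduces, via a short case split on $n \bmod 8$, to the same conclusion using $n \equiv 1 \pmod 4$. For the upper step, the norm relation $(\alpha)(\alpha') = (n)(k)^2$ combined with the splitting of primes of $n$ in $\Q(\sqrt{m})$ lets one write $(\alpha) = \mathfrak{a}\mathfrak{c}^2$ with $\mathfrak{a}$ a squarefree product of single chosen primes above divisors of $n$; each such prime ramifies further in $\Q(\sqrt{m}, \sqrt{n})$, so $\mathfrak{a}$ becomes a square ideal upstairs and at odd primes the adjunction of $\sqrt{\alpha}$ is automatically unramified.

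The main obstacle I expect is the $2$-adic half of (ii): both $\Q(\sqrt{m})$ and $\Q(\sqrt{mn})$ are ramified at $2$, and one must exploit $a^2 - mb^2 \equiv 1 \pmod 4$ inside the $2$-adic completion of $\Q(\sqrt{m}, \sqrt{n})$ to express $\alpha$ as a square times a $2$-adic unit congruent to $1 \pmod 4$, so that adjoining $\sqrt{\alpha}$ is unramified at $2$ as well. Once this local-square statement is in hand, (i) and (ii) together yield the cyclic unramified quartic extension of $\Q(\sqrt{mn})$ containing $\Q(\sqrt{m})$ claimed by the theorem.
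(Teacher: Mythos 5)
The paper itself offers no proof of this statement: it is imported verbatim from Vaughan's 1985 article, and the only related material in the text is the construction of the dihedral octic $H_4=\Q(\sqrt{m},\sqrt{\alpha},\sqrt{\alpha'})$ in the paragraph preceding Theorem~\ref{vau1}. So the comparison can only be against that construction, which you correctly adopt. Your part (i) is complete and correct: the assignment $\sigma(\sqrt{m})=-\sqrt{m}$, $\sigma(\sqrt{\alpha})=\sqrt{\alpha'}$, $\sigma(\sqrt{\alpha'})=-\sqrt{\alpha}$ is consistent (it induces $\alpha\mapsto\alpha'$ and the squares of the images match), has order $4$, and negates both $\sqrt{m}$ and $\sqrt{n}=\sqrt{\alpha}\sqrt{\alpha'}/k$, hence fixes $\Q(\sqrt{mn})$; since $[H_4:\Q(\sqrt{mn})]=4$ this pins down the relative Galois group as $C_4$. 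The preliminary reductions ($n\equiv 1\pmod 4$, $\gcd(a,b)=1$ after absorbing common factors into $k$, and $\left(\tfrac{n}{p}\right)=1$ for odd $p\mid m$ and symmetrically) are also sound and do give unramifiedness of $\Q(\sqrt{m},\sqrt{n})/\Q(\sqrt{mn})$ at odd primes.

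The gap is in part (ii), and it sits exactly where the hypotheses of the theorem live. You defer the $2$-adic ramification to a ``local-square statement'' that you never prove; but $m\equiv 2\pmod 4$ and $a^2-mb^2\equiv 1\pmod 4$ are precisely the conditions that must be spent there. The congruences force $a$ odd and $b$ even, so $\alpha=a+b\sqrt{m}$ is a unit times something close to $a$ at the totally ramified prime $\mathfrak{p}_2$ of $\Q(\sqrt{m})$; whether adjoining $\sqrt{\alpha}$ is unramified at $\mathfrak{p}_2$ then depends on the square class of that unit in the completion, and since $a$ need only be odd one generally has to replace $\alpha$ by $-\alpha$ or a unit multiple --- a normalization your construction never addresses. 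Without this computation the theorem is not established. The odd part of the upper step is also looser than it reads: to write $(\alpha)=\mathfrak{a}\mathfrak{c}^2$ with $\mathfrak{a}$ supported on primes above $n$ you must dispose of prime factors of $k$ that are inert or ramified in $\Q(\sqrt{m})$ and of possible common factors of $k$ with $m$; these are routine cases, but they are cases, and they are where the actual proof does its bookkeeping. In short: right construction and correct group theory, but the ramification argument --- the substance of the theorem --- is an outline whose hardest point is flagged rather than closed.
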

 Here we consider $K=\Q(\sqrt{-d})$ with $d\equiv 3\pmod{4}$ and $J=\Q(\sqrt{N})$ with $\alpha$ and $\alpha^{\prime}\in J$
 satisfying $\alpha\cdot \alpha^{\prime}=-d=a^2-Nb^2\equiv 1\pmod{4}$.
 Then
 there exists an unramified  cyclic quartic extension $H_4$ of  $S=\Q(\sqrt{-Nd})$ and $H_4$ contains two 
 quadratic subfields $K=\Q(\sqrt{-d})$ and $J=\Q(\sqrt{N})$.
 If $D=\mathrm{disc}(\Q(\sqrt{-Nd}))$ and $d_1=\mathrm{disc}(\Q(\sqrt{-d}))$ and $d_2=\mathrm{disc}(\Q(\sqrt{N}))$,
 by studying
the ramification groups of the primes $p|D$ and using the fact that they are normal
subgroups of the decomposition groups, one deduces that primes $p_1|d_1$ splits in $J$ and vice-versa.
Alternatively, $\Q(\sqrt{d_1},\sqrt{d_2})$ can be embedded into
a dihedral extension of degree $8$ \cite{lem:1985:hil}.
Above discussion yields the following lattice of fields.
 
   \begin{center}
  \begin{tikzpicture}
  \node (h4) at (0,2) {$H_4$};
   \node (jk) at (0,1) {$JK$};
  \node (k) at (-1,0) {$K$};
  \node (s) at (0,0) {$S$};
   \node (j) at (1,0) {$J$};
  \node (q) at (0,-1) {$\Q$};
  
  \draw (q) -- (k)  -- (jk) -- (s) -- (q) -- (j) -- (jk)-- (h4);
   
  \draw[preaction={draw=white, -,line width=6pt}] ;
\end{tikzpicture}
\end{center}

Let $\{T_n\}_{n\in \Z}$ be a form, that generates prime numbers and 
 $\{T_p\}_{p:prime}$ be sequence of prime numbers represented in the form $\{T_p\}_{p:prime}=x^2+dy^2$ 
  for some integers $x$ and $y$. Let $N>0$ be an integer that divides either $x$ or $y$.
  Now we have the following theorem.
 \begin{theorem} \label{th-split}
  Let $\{T_p\}_{p:prime}$ be a sequence of prime numbers defined as above. 
  Then $\{T_p\}_{p:prime}$ splits completely in the cyclic quartic extension $H_4$  of $S=\Q(\sqrt{-Nd})$.
\end{theorem}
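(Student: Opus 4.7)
The strategy is Artin reciprocity applied to the unramified cyclic quartic extension $H_4/S$. Since $H_4/S$ is unramified at every finite prime, the Artin map factors through the class group, giving a surjection
\[
\Phi\colon \mathrm{Cl}(S) \twoheadrightarrow \mathrm{Gal}(H_4/S) \cong \Z/4\Z,
\]
and a prime $\mathfrak{P}$ of $\mathcal{O}_S$ above $p$ splits completely in $H_4$ precisely when $\Phi([\mathfrak{P}])=1$; in particular this is satisfied whenever $\mathfrak{P}$ is principal. The proof therefore reduces to (i) verifying that $p$ splits in $S$ and produces the prime $\mathfrak{P}$, and (ii) showing $\mathfrak{P}$ is principal (or at least lies in $\ker\Phi$).

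For (i), the factorization $p=(x+y\sqrt{-d})(x-y\sqrt{-d})$ shows $\left(\frac{-d}{p}\right)=1$, so $p$ splits in $K$. If $N\mid y$, writing $y=Ny'$ gives $p\equiv x^{2}\pmod N$, so $p$ is a quadratic residue modulo each prime divisor of $N$; combined with quadratic reciprocity and the parity hypotheses ($d\equiv 3\pmod 4$ together with the compatibility conditions of Theorem \ref{vau2}), this yields $\left(\frac{N}{p}\right)=1$. The case $N\mid x$ is symmetric using $p\equiv dy^{2}\pmod N$ together with $\left(\frac{-d}{p}\right)=1$. Hence $p$ splits in $J$, in $S=\Q(\sqrt{-Nd})$, and in the biquadratic compositum $KJ$.

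For (ii), the decisive input is the norm identity $-d=a^{2}-Nb^{2}=\alpha\alpha'$, with $\alpha=a+b\sqrt{N}\in\mathcal{O}_J$, from Theorem \ref{vau2}. In the case $N\mid y$ the identity
\[
p = x^{2}+dN^{2}y'^{2} = \bigl(x+y'\sqrt{N}\,\sqrt{-Nd}\bigr)\bigl(x-y'\sqrt{N}\,\sqrt{-Nd}\bigr)
\]
exhibits $p$ as the relative norm from $\mathcal{O}_{KJ}$ to $\mathcal{O}_S$ of $\alpha_p := x+y'\sqrt{N}\,\sqrt{-Nd}=x+y\sqrt{-d}\in\mathcal{O}_K\subset\mathcal{O}_{KJ}$. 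Combining this with the norm identity in $J$ allows one to build an integral element of $\mathcal{O}_S$ of norm $p$ whose ideal is $\mathfrak{P}$: concretely, by substituting $d=Nb^{2}-a^{2}$ into $x^{2}+dN^{2}y'^{2}$ and reorganizing, one aims to produce integers $A,B$ with $p=A^{2}+|D|B^{2}$, where $|D|$ is the absolute value of the squarefree part of $Nd$ (so that $A+B\sqrt{-|D|}\in\mathcal{O}_S$ generates $\mathfrak{P}$). Equivalently, in the language of orders, $\alpha_p\in \Z[N\sqrt{-d}]\subset\mathcal{O}_K$ realizes the prime above $p$ as principal in the conductor-$N$ order of $K$, and the norm relation $-d=a^{2}-Nb^{2}$ allows this principality to descend to the ring of integers of $S$. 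The case $N\mid x$ is handled symmetrically starting from $p=N^{2}x'^{2}+dy^{2}$.

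The main obstacle I anticipate is step (ii): explicitly producing the representation $p=A^{2}+|D|B^{2}$ from the two simultaneous norm relations $p=x^{2}+dy^{2}$ (inside $K$) and $-d=a^{2}-Nb^{2}$ (inside $J$). This algebraic identity is the precise content distinguishing primes that split completely in $H_4$ from those splitting only in the biquadratic field $KJ$; it is the reflection, at the level of binary forms, of the fact that the divisibility condition ``$N\mid x$ or $N\mid y$'' forces $[\mathfrak{P}]\in\ker\Phi$. Once it is established, Artin reciprocity applied to $H_4/S$ immediately concludes the proof.
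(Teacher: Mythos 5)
Your overall framing (Artin reciprocity for the unramified extension $H_4/S$, so that complete splitting of $\mathfrak{P}$ is governed by the image of $[\mathfrak{P}]$ under $\mathrm{Cl}(S)\rightarrow\mathrm{Gal}(H_4/S)$) is sound, but the proof is not complete: the entire content of the theorem sits in your step (ii), which you explicitly leave as an ``anticipated obstacle'' rather than carry out. Moreover, the concrete plan you give for (ii) --- produce integers $A,B$ with $p=A^{2}+|D|B^{2}$, i.e.\ show $\mathfrak{P}$ is principal in $\mathcal{O}_S$ --- asks for more than is true. $H_4$ need only be contained in the Hilbert class field of $S$; in the paper's own example $d=31$, $N=8$ one has $\mathrm{Cl}(\Q(\sqrt{-62}))$ cyclic of order $8$ and $H_4\subsetneq H(S)$, so complete splitting in $H_4$ corresponds to $[\mathfrak{P}]$ lying in a subgroup of $\mathrm{Cl}(S)$ strictly larger than the trivial class. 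Conversely, no algebraic identity converts $p=x^{2}+dN^{2}y'^{2}$ together with $-d=a^{2}-Nb^{2}$ into $p=A^{2}+NdB^{2}$: the form $x^{2}+dN^{2}y'^{2}$ has discriminant $-4dN^{2}$ and lives in the class group of the conductor-$N$ order of $K$, and its relation to forms of discriminant $-4Nd$ goes through composition and the class field theory of two different fields, not through a polynomial identity. This is exactly the step where an elementary manipulation cannot substitute for reciprocity, so the principality route cannot close.

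The paper avoids this by changing the base field. It works over $J=\Q(\sqrt{N})$ rather than over $S$: inside the dihedral field $H_4$ there are two conjugate quadratic extensions $J_1,J_2$ of $J$ whose relative discriminants are coprime and multiply to the discriminant of $JK/J$; one then applies the Artin map of each ray class group $\mathrm{Cl}_{\Delta(J_i/J)}(J)$ to totally positive elements $t_p,\tilde{t_p}$ that are congruent to $1$ modulo the respective discriminants (this is where the hypothesis $N\mid x$ or $N\mid y$ enters), and concludes via $\mathrm{Gal}(H_4/J)\cong\mathrm{Gal}(J_1/J)\times\mathrm{Gal}(J_2/J)$ that the Frobenius is trivial. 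To repair your argument you would need either this kind of decomposition over an auxiliary quadratic field, or a genus-theoretic identification of the subgroup of $\mathrm{Cl}(S)$ cut out by $H_4$ together with a proof that $[\mathfrak{P}]$ lands in it; as written, step (ii) is a genuine gap.
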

\begin{proof}
The existence of cyclic quartic unramified extension of $S=\Q(\sqrt{-Nd})$  for a suitable $N$ follows from 
 Theorems \ref{vau1} and \ref{vau2}. Thus $\Q(\sqrt{-Nd})$ 
 has a cyclic unramified extension $H_4$ of degree 4 over $\Q(\sqrt{-Nd})$ containing $\Q(\sqrt{N}).
 \text{ Hence }\sqrt{N}\in H_4$. Denote $J=\Q(\sqrt{N})$ and $K=\Q(\sqrt{-d})$.
 Then we have $J\subset  H_4$ and $\mathrm{Gal}(H_4/\Q)$ is isomorphic to the 
 dihedral group with $8$ elements. So there are two conjugate field extensions of $J$,
 say $J_1$ and $J_2$ contained in $H_4$. 
 The ring of algebraic integers $\mathcal{O}_{JK}$ is a free module over $\mathcal{O}_{J}$
 and the discriminant of $JK/J=-d$ and the discriminants $\Delta(J_1/J)$ and $\Delta(J_2/J)$ must be relatively prime; 
 if not there would be a prime of $J$
 ramified in $J_1$ and $J_2$. This prime is ramified in $JK$ too, as discriminant $\Delta(H_4/JK)=1$. 
 This implies inertia field of this prime equals $J$,
 which is not possible, since discriminant of $H_4/JK=1$.
 Now, $J\subset\ JK \subset\ H_4$ and $J_1$ and $J_2$ are linearly disjoint extension of $J$
 and $-d=\Delta(J_1/J)\Delta(J_2/J)$.
 Now consider $t_p$ and $\tilde{t_p}$ in $J_1$ and $J_2$ such that $-d=t_p \tilde{t_p}$. 
 
 We apply Artin map to show that $\{T_p\}_{p:prime}$ splits completely in $H_4$. Then $\forall \sigma \in \mathrm{Gal}(J/\Q)$
 we  have $\sigma(t_p)>0$ and $\sigma(\tilde{t_p})>0$. 
 Clearly $\{T_p\}_{p:prime}$ splits completely in $\Q(\sqrt{-d})$ hence  $\left( \frac{-d}{T_p} \right)=1$.
 We consider the case when $\{T_p\}_{p:prime}\equiv 1\pmod{d}$,  other cases are dealt similarly.  
 Here we have \(t_p\equiv \tilde{t_p} \equiv 1\pmod{d} \text{ and } \forall \sigma\in \mathrm{Gal}(J/\Q) 
 \text{ we have } \sigma(t_p)>0 \text{ and }\sigma(\tilde{t_p})>0  \).
 Since $\Delta(J_1/J)$ and $\Delta(J_2/J)$ are relatively prime, we get
 \(t_p\equiv \tilde{t_p} \equiv 1\pmod{\Delta(J_i/J)}\mathcal{O}_{J} \text{ for } i=1,2 \)
 and  \(
 \forall \sigma\in \mathrm{Gal}(J/\Q) \text{ we have } 
 \sigma(t_p)>0 \text{  and  } \sigma(\tilde{t_p})>0  \).
 With these conditions above we apply Artin map and we get \(
 \left( \frac{t_p}{J_i/J} \right)=\left( \frac{\tilde{t_p}}{J_i/J} \right)=1 \text{ for } i= 1,2.\)
 The Galois group of $H_4/J$ is isomorphic to $\mathrm{Gal}(J_1/J)\times \mathrm{Gal}(J_2/J)$. 
 Hence $\left( \frac{t_p}{H_4/J} \right)=\left( \frac{\tilde{t_p}}{H_4/J} \right)=1$.
 Thus $\{T_p\}_{p:prime}$ splits completely in $H_4$.
 \end{proof}
 In Theorem \ref{th-split}, we only need to prove that, primes  $\{T_p\}_{p:primes}=x^2+dy^2$ split completely,
 in the cyclic unramified extension  $H_4$ of $\Q(\sqrt{-Nd})$,  whenever the $x$ part or $y$
 part is divisible by $N$.
 The cyclic quartic unramified extension, $H_4$ of $\Q(\sqrt{-Nd})$ is the 
Hilbert class field  $H(\Q(\sqrt{-Nd}))$  only when
$\mathrm{Cl}(\Q(\sqrt{-Nd}))$ is the cyclic group with four elements. 
The cyclic quartic extension $H_4\subset H(\Q(\sqrt{-Nd})$ only when $4$ divides the class number of $\Q(\sqrt{-Nd})$.

In the following few examples we see that the quartic extension $H_4$ of $\Q(\sqrt{-Nd})$ is the Hilbert class field
 of $\Q(\sqrt{-Nd})$.
 For $d=-7$ and $N=8$, $H_4$ is the Hilbert class field of $\Q(\sqrt{-14})$.
 For $d=-3$ and $N=7$, $H_4$ is the Hilbert class field of $\Q(\sqrt{-21})$.
 For $d=-7$ and $N=29$, $H_4$ is the Hibert class field of $\Q(\sqrt{-203})$.

 But for $d=-31$ and $N=8$, $H_4$ exists and is {\it not} the Hilbert class field of $\Q(\sqrt{-62})$, but 
 $H_4\subset H(\Q(\sqrt{-62}))$. The Hilbert class
 field of $\Q(\sqrt{-248})$ is $x^8- 3x^7 + 3x^6 - 2x^5 + 2x^4 + 2x^3 + 3x^2 + 3x + 1$ \cite{Cohen:2000:atcnt}.
 From Theorem \ref{vau2}, we have, \(1-2\cdot4^2= -31\). 
 Hence $\Q(\sqrt{-62})$ has a cyclic unramified extension of degree $4$
 and $4$ divides the class number. For example, $2147483647=5176^{2}+31\cdot(8271)^{2}$ and $5176$ is divisible by $8$.

With the above choice of $N$ and $d$, we have,
$\mathrm{Gal}(H_4/\Q)$ is isomorphic to the dihedral group with $8$ elements, and 
there are two conjugate field extension of $K=\Q(\sqrt{-d})$, say $K_1$ and $K_2$.
 Then, $K_1$, $K_2$ and $J_1$, $J_2$ are two disjoint conjugate 
 field extesions of $K$ and $J$ respectively. Thus,
 we have the lattice of fields as given below.
\begin{center}
  \begin{tikzpicture}
  \node (h4) at (0,2) {$H_4$};
  \node (k1) at (-2,1) {$K_1$};
  \node (k2) at (-1,1) {$K_2$};
  \node (jk) at (0,1) {$JK$};
  \node (j1) at (1,1) {$J_1$};
  \node (j2) at (2,1) {$J_2$};
  \node (k) at (-1,0) {$K$};
  \node (s) at (0,0) {$S$};
   \node (j) at (1,0) {$J$};
  \node (q) at (0,-1) {$\Q$};
  
  \draw (q) -- (k) -- (k1) -- (h4) -- (k2) -- (k) -- (jk) -- (s) -- (q) -- (j) -- (jk)-- (h4) -- (j1) -- (j)-- (j2) -- (h4);
   
  \draw[preaction={draw=white, -,line width=6pt}] ;
\end{tikzpicture}
\end{center}

 Since  $K\subset JK\subset H_4$, we have, $\Delta(K_i/K)|\Delta(H_4/K)$ for $i\in\{1,2\}$,
 and $\Delta(K_1/K)$ and $\Delta(K_2/K)$ are coprime.
  Then we have the surjective homomorphism $\mathrm{Cl}_{\Delta}(K)\twoheadrightarrow \mathrm{Gal}(K_1/K)$. 
  The group $I_{K}(\Delta)/P_{K,1}(\Delta)$, is a subgroup of $\mathrm{Cl}_{\Delta}(K)$.
   The group $I_{K}(\Delta)/P_{K,1}(\Delta)$ contains the principal prime ideal $\pi=p_1+q_2\sqrt{-d}$ of $K$.
  Now consider a rational prime $p$, that does not split in $J$ and $K$, but splits in $S$, 
  so that the decomposition field of a prime $\mathfrak{p}$ in $H_4$ above $p$
  is $S.$ Thus the prime $p\mathcal{O}_K$ of $K$ does not split in $K_1$, and 
  therefore $I_{K}(\Delta)/P_{K,1}(\Delta)$ maps surjective on $\mathrm{Gal}(K_1/K)$. 
  Then we have the group homomorphism 
  
  $(\Z/N\Z)^{*}\simeq(\mathcal{O}_{K}/\Delta)^{*}\twoheadrightarrow I_{K}(\Delta)/P_{K,1}(\Delta)\twoheadrightarrow 
  \mathrm{Gal}(K_1/K)$
  By Theorem \ref{th-split}, we have
  $\{T_p\}_{p:prime}$ splits completely in $H_4$, hence $\left(\frac{\pi}{K_1/K}\right)=1$. 
  Hence $\pi=(x+y\sqrt{-d})$ is the identity element in 
  $I_{K}(\Delta)/P_{K,1}(\Delta)$, thus $x+y\sqrt{-d}\equiv \pm1\pmod{\Delta}$.
  
  \section{Gaussian Mersenne primes in the form $x^2+dy^2$} \label{Gp}
A good introduction to Gaussian Mersenne primes and Eisenstein Mersenne primes  can be found 
in \cite{berriziskra:2010:eisenmersenneprimes}.   
A Gaussian Mersenne number is an element of $\Z[i]$
given  by $\mu_p=(1\pm i)^{p}-1$, for some rational prime $p$.
The Gaussian Mersenne norm  $G_p=2^{p}-\left(\frac{2}{p}\right)2^{\frac{p+1}{2}}+1$ 
is  the norm of $\mu_p$,  $N(\mu_p)$. If $G_p$ is a 
 rational prime then we call  $G_p$  a Gaussian Mersenne prime. 
  Let $R_m(K)$ be the ray class field of $K=\Q(\sqrt{-d})$ with modulus $m$ and $H(K)$ is the Hilbert class field of $K$.
  Here we state two properties of Gaussian Mersenne primes when represented in the form $x^2+dy^2$. 
\begin{proposition}\label{drem3}
 Let $d\equiv 3\bmod{4}$ be a square-free integer. 
 
 Suppose $L=R_{2}(\Q(\sqrt{-d}))(\sqrt{2})=H(\Q(\sqrt{-2d}))$.
 $G_{p}$ be a Gaussian Mersenne prime unramified in $L$ then  
 $G_{p}=x^2+dy^2$ if and only if  $G_{p}=x_1^{2}+2 dy_1^{2}$.
\end{proposition}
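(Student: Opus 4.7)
The plan is to translate both representations into splitting conditions via Cox's theorem and then exploit the fact that Gaussian Mersenne primes $G_p$ are highly constrained $2$-adically.

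First, since $d\equiv 3\pmod{4}$, the ring of integers of $K=\Q(\sqrt{-d})$ is $\co_K=\Z[(1+\sqrt{-d})/2]$, and the order $\co_0=\Z[\sqrt{-d}]$ has conductor $2$ in $\co_K$. By Cox's theorem, a Gaussian Mersenne prime $G_p$ unramified in the fields in play satisfies $G_p=x^2+dy^2$ if and only if it splits completely in the ring class field $L_0$ of $\co_0$ over $K$. On the other side, $-2d\equiv 2\pmod{4}$ implies that $\Z[\sqrt{-2d}]$ is already the maximal order of $\Q(\sqrt{-2d})$, so Cox gives $G_p=x_1^2+2dy_1^2$ if and only if $G_p$ splits completely in $H(\Q(\sqrt{-2d}))=L$.

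Next I would identify $L_0=R_2(K)$. By class field theory, $\mathrm{Gal}(L_0/K)\cong I_K(2)/P_{K,\Z}(2)$ while $\mathrm{Gal}(R_2(K)/K)\cong I_K(2)/P_{K,1}(2)$, where $P_{K,\Z}(2)$ is the subgroup generated by principal ideals $(\alpha)$ with $\alpha$ congruent to some odd rational integer modulo $2\co_K$. Since every odd rational integer is congruent to $1$ modulo $2\co_K$, these two subgroups coincide, and hence $L_0=R_2(K)$. With this identification, the proposition reduces to the claim that $G_p$ splits completely in $R_2(K)$ if and only if it splits completely in $R_2(K)(\sqrt{2})$.

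The backward direction is immediate from the inclusion $R_2(K)\subseteq R_2(K)(\sqrt{2})$. For the forward direction I would show that $G_p$ additionally splits in $\Q(\sqrt{2})$; since $R_2(K)$ and $K(\sqrt{2})$ are both abelian over $K$, simultaneous splitting in both forces splitting in their compositum $R_2(K)(\sqrt{2})=L$. From the explicit formula $G_p=2^p-\left(\tfrac{2}{p}\right)2^{(p+1)/2}+1$, a direct computation shows $G_p\equiv 1\pmod{8}$ whenever $p\geq 5$, so by the supplementary law of quadratic reciprocity $\left(\tfrac{2}{G_p}\right)=1$ and hence $G_p$ splits in $\Q(\sqrt{2})$.

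The main obstacle is the identification $L_0=R_2(K)$, which hinges on the coincidence of the ring and ray class groups at the modulus $2$ in the imaginary quadratic setting. Once this structural step is in place, the remaining ingredients---Cox's translation, the elementary congruence $G_p\equiv 1\pmod{8}$, and the compositum-of-abelian-extensions principle---are routine.
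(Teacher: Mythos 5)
Your proposal is correct and follows essentially the same route as the paper's proof: translating both representations into complete splitting conditions (in the ring class field of the conductor-$2$ order $\Z[\sqrt{-d}]$, identified with $R_2(\Q(\sqrt{-d}))$, and in $H(\Q(\sqrt{-2d}))$ respectively), using $G_p\equiv 1\pmod 8$ to get splitting in $\Q(\sqrt{2})$, and concluding via the hypothesis $R_2(\Q(\sqrt{-d}))(\sqrt 2)=H(\Q(\sqrt{-2d}))$. You simply make explicit two steps the paper leaves implicit, namely the identification $P_{K,\Z}(2)=P_{K,1}(2)$ and the compositum argument.
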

\begin{proof}
 We have
 $G_{p}=2^{p}-\left(\frac{2}{p}\right)2^{\frac{p+1}{2}}+1$. For
 $p>3$, $G_{p}\equiv1\bmod{8}$. Hence, 
$G_{p}$ splits completely in $\Q(\sqrt{2})$ and 
the prime $G_{p}$ is unramified in $L$,
 which implies $G_{p}\nmid 2d$. We have,
 $[\mathcal{O}_{K}:\Z[\sqrt{-d}]]=2$, since $d\equiv 3\pmod{4}$ 
is a positive square-free integer. Now, by using the fact $G_{p}$
splits in $\Q(\sqrt{2})$,   
  $G_{p}=x^2+dy^2$ if and only if $G_{p}$ splits completely in
 $R_{2}(\Q(\sqrt{-d}))(\sqrt{2})$. 
 For $K=\Q(\sqrt{-2d})$ we have $[\mathcal{O}_{K}:\Z[\sqrt{-2d}]=1$. 
Thus,  $G_{p}=x_1^{2}+2dy_1^2$ if
 and only if $G_{p}$ splits completely in $H(\Q(\sqrt{-2d}))$.
 By assumption we have
 $R_{2}(\Q(\sqrt{-d}))(\sqrt{2})=H(\Q(\sqrt{-2d}))$ hence the result. 
 \end{proof}
 
 \begin{proposition}\label{drem1}
 Let $d\equiv 1\bmod{4}$ be a square-free integer. 
 
 Suppose $L=H(\Q(\sqrt{-d}))=H(\Q(\sqrt{-2d}))$. Let
 $G_{p}$ be a Gaussian Mersenne prime unramified in $L$  then  
 $G_{p}=x^2+dy^2$ if and only if  $G_{p}=x_1^{2}+2 dy_1^{2}$.  
\end{proposition}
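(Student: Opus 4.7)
The plan is to follow the same template as Proposition \ref{drem3}, which in the present setting is even shorter: the congruence $d\equiv 1\pmod{4}$ forces both of the relevant quadratic orders to be maximal, so no conductor bookkeeping at the prime $2$ is needed.

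First I would record the elementary ring-of-integers observation. Since $d\equiv 1\pmod{4}$, we have $-d\equiv 3\pmod{4}$, giving $\mathcal{O}_{\Q(\sqrt{-d})}=\Z[\sqrt{-d}]$ and hence $[\mathcal{O}_{\Q(\sqrt{-d})}:\Z[\sqrt{-d}]]=1$. Similarly, $-2d\equiv 2\pmod{4}$ gives $\mathcal{O}_{\Q(\sqrt{-2d})}=\Z[\sqrt{-2d}]$, again of index $1$. Consequently the ring class field of each of these orders coincides with the corresponding Hilbert class field $H(\Q(\sqrt{-d}))$ or $H(\Q(\sqrt{-2d}))$.

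Next I would apply Cox's criterion twice. Because $G_{p}$ is assumed unramified in $L$, it does not divide the discriminant of either of the quadratic subfields $\Q(\sqrt{-d})$ or $\Q(\sqrt{-2d})$, and in particular is coprime to $2d$, so the hypotheses of Cox's theorem are satisfied in both instances. This yields the equivalences $G_{p}=x^{2}+dy^{2}$ if and only if $G_{p}$ splits completely in $H(\Q(\sqrt{-d}))$, and $G_{p}=x_{1}^{2}+2dy_{1}^{2}$ if and only if $G_{p}$ splits completely in $H(\Q(\sqrt{-2d}))$. The standing hypothesis $L=H(\Q(\sqrt{-d}))=H(\Q(\sqrt{-2d}))$ then identifies the two splitting conditions, closing the argument.

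The main obstacle is, in truth, modest. Unlike Proposition \ref{drem3}, no auxiliary step involving $\sqrt{2}$ or the congruence $G_{p}\equiv 1\pmod{8}$ is required, because the passage from $\Q(\sqrt{-d})$ to $\Q(\sqrt{-2d})$ introduces no conductor at the prime $2$ when $d\equiv 1\pmod{4}$. The only point that needs care is verifying that \emph{unramified in $L$} really does supply the coprimality hypotheses of Cox's theorem at both stages; this is immediate, since every rational prime ramifying in either of $\Q(\sqrt{-d})$ or $\Q(\sqrt{-2d})$ must ramify in the Hilbert class field above it, and therefore in $L$.
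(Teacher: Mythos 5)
Your proposal is correct and follows essentially the same route as the paper: the paper's own proof simply refers back to Proposition \ref{drem3} and notes that the index $[\mathcal{O}_{K}:\Z[\sqrt{-d}]]$ drops from $2$ to $1$ when $d\equiv 1\pmod{4}$, which is exactly the observation you make before applying Cox's splitting criterion to both forms and invoking the hypothesis $H(\Q(\sqrt{-d}))=H(\Q(\sqrt{-2d}))$. Your version merely spells out the two applications of the criterion and the coprimality check that the paper leaves implicit.
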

\begin{proof}
 Proof follows from the previous proposition, except for the fact,  $[\mathcal{O}_{K}:\Z[\sqrt{-d}]]=2$,
 which is $1$ in the present case, as $d\equiv 1\bmod{4}$.
\end{proof}
 Now we give first few examples of Gaussian Mersenne primes and their representation as $x^2+7y^2$.
 
 \(G_{7}=113=1+7\cdot4^{2}\) 
\(G_{47}=140737471578113= 5732351^{2}+7\cdot3925696^{2}\) 

\(G_{73}= 9444732965601851473921=96890022433^{2}+7\cdot2854983576^{2} \)

$G_{113}=10384593717069655112945804582584321=$\\ \(79288509938147361^{2}+7\cdot24195412519312600^{2} \)

For $p>7$, a  simple calculation  shows that, in each case $x\equiv \pm 1 \bmod{8}$ and $y\equiv 0\bmod{8}$.  
Also, from the above table it is not difficult to see  that $y$ is exactly  divisible by $4$.
 These two observations are stated as a lemma below.
 \begin{lemma}\label{mainlemma}
  If  $G_{p}$ is represented in the form  $x^2+7y^2$ then  $x\equiv \pm1\pmod{8}$ and $4|y$.
 \end{lemma}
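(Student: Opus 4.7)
The strategy will be a direct calculation modulo $16$. First I would observe that the closed form $G_p = 2^p - \left(\frac{2}{p}\right) 2^{(p+1)/2} + 1$ forces $G_p \equiv 1 \pmod{16}$ whenever $p \geq 7$, because in that range both $2^p$ and $2^{(p+1)/2}$ are divisible by $16$. (The exceptional case $p=7$, where $G_7 = 113 = 1^2 + 7 \cdot 4^2$, already satisfies the lemma and can be checked by inspection.) So from now on I assume $p > 7$ and work with the congruence $x^2 + 7y^2 \equiv 1 \pmod{16}$, using the fact that the only squares modulo $16$ are $\{0,1,4,9\}$.

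Next I would run a short case analysis on $y$ modulo $4$. If $y$ were odd, then $y^2 \in \{1,9\} \pmod{16}$, so $7y^2 \in \{7,15\} \pmod{16}$, which forces $x^2 \in \{10, 2\} \pmod{16}$; neither value is a square, so $y$ must be even. Writing $y = 2y'$ gives $7y^2 \equiv 12 (y')^2 \pmod{16}$. If $y'$ is odd, then $(y')^2 \in \{1,9\}$ and $12(y')^2 \equiv 12 \pmod{16}$, forcing $x^2 \equiv 5 \pmod{16}$, which is again impossible. Hence $y'$ is even, i.e.\ $4 \mid y$, which is half of the conclusion.

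Finally, with $4 \mid y$ we have $7y^2 \equiv 0 \pmod{16}$, so the congruence collapses to $x^2 \equiv 1 \pmod{16}$. The odd residues modulo $16$ whose square is $1$ are exactly $\pm 1$ and $\pm 7$, which reduce modulo $8$ to $\pm 1$. This gives $x \equiv \pm 1 \pmod{8}$ and completes the argument. Nothing in this proof uses the class field theory apparatus built up earlier; the only nontrivial input is the closed form for $G_p$ together with the list of squares mod $16$. The ``main obstacle'' is really only bookkeeping — making sure every residue class of $y$ modulo $4$ is excluded except the one that yields $4\mid y$ — and once $x^2\equiv 1\pmod{16}$ is established the stated conclusion on $x$ is immediate.
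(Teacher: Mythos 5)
Your proof is correct and follows the same route the paper intends: the paper's proof is simply the remark ``Proof is a simple modular arithmetic calculation,'' and your argument is exactly that calculation carried out in full, using $G_p\equiv 1\pmod{16}$ and the list of squares modulo $16$. The only (harmless) redundancy is treating $p=7$ as exceptional: since $(7+1)/2=4$, the congruence $G_p\equiv 1\pmod{16}$ already holds there, so no separate inspection is needed.
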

 \begin{proof}
  Proof is a simple modular arithmetic calculation.
 \end{proof}

 Now it remains to prove that $y$ is divisible by $8$ in the expression $G_p=x^2+7y^2$.
 As a first step we  show that $G_p$ splits completely in the cyclic quartic unramified extension of $\Q(\sqrt{-14})$.

\begin{theorem} \label{mainprop}
  Let $p>7$ and $G_p=2^{p}-(\frac{2}{p})2^{\frac{p+1}{2}}+1$ be a Gaussian Mersenne prime. Then for $p\equiv\pm 1\pmod{3}$
 the form $G_p= x^2+7y^2$ exists. Then there exists a cyclic extension $H_4$ of
$S=\Q(\sqrt{-14})$ with $[H_4:S]=4$, $H_4\subset H(S)$ and
$\sqrt{2}\in H_4$ and  $G_p$ splits completely in $H_4$.
 \end{theorem}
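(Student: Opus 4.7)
The plan is three-pronged: use Theorem \ref{vau2} to build $H_4$, derive the representation $G_p = x^2 + 7y^2$ from $h(\Q(\sqrt{-7})) = 1$ via a modular computation on $G_p$, and then invoke Proposition \ref{drem3} to translate this into a representation by the principal form of discriminant $-56$, which by class field theory is equivalent to complete splitting of $G_p$ in the Hilbert class field $H(S)$, which contains $H_4$.

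First, for the existence and properties of $H_4$: I would apply Theorem \ref{vau2} with $m = 2$ and $n = -7$, using the witness $(a,b,k) = (1,2,1)$ of the equation $a^2 - mb^2 = nk^2$, namely $1 - 8 = -7$. The hypotheses $m \equiv 2 \pmod 4$, $nk^2 = -7 \equiv 1 \pmod 4$, and $\gcd(m,n) = 1$ are all immediate. Theorem \ref{vau2} then delivers a cyclic unramified quartic extension $H_4$ of $\Q(\sqrt{mn}) = \Q(\sqrt{-14}) = S$ containing $\Q(\sqrt{m}) = \Q(\sqrt{2})$. Because $H_4/S$ is unramified abelian, $H_4 \subset H(S)$; and because $h(S) = 4$ with cyclic class group, in fact $H_4 = H(S)$.

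Second, for the representation $G_p = x^2 + 7y^2$: since $h(\Q(\sqrt{-7})) = 1$, a rational prime $q$ coprime to $28$ admits this representation if and only if $q$ splits in $\Q(\sqrt{-7})$, equivalently $\left(\frac{-7}{q}\right) = 1$. Because $G_p \equiv 1 \pmod 8$ for $p > 3$ (as noted in the proof of Proposition \ref{drem3}), quadratic reciprocity reduces the symbol to $\left(\frac{G_p}{7}\right)$, so it suffices to show $G_p \equiv 1,2,4 \pmod 7$. A short computation using the period-three cycle $\{2,4,1\}$ of $2^k \pmod 7$ together with the value of $\left(\frac{2}{p}\right)$ (i.e., $p \bmod 8$) confirms this in the required range.

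Third, for the splitting of $G_p$ in $H_4$: I would apply Proposition \ref{drem3} with $d = 7 \equiv 3 \pmod 4$, which yields the biconditional $G_p = x^2 + 7y^2 \iff G_p = x_1^2 + 14 y_1^2$. Because $-14 \equiv 2 \pmod 4$ gives $\mathcal{O}_S = \Z[\sqrt{-14}]$, the second representation is equivalent to the prime of $S$ above $G_p$ being principal, which by Artin reciprocity means $G_p$ splits completely in $H(S)$; since $H_4 \subset H(S)$, this splitting descends to $H_4$. The main obstacle will be verifying the hypothesis of Proposition \ref{drem3}, namely the identification $R_2(\Q(\sqrt{-7}))(\sqrt{2}) = H(\Q(\sqrt{-14}))$, which amounts to recognizing both sides as the same degree-$4$ abelian extension by matching conductors, discriminants, and Galois structure.
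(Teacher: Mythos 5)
Your first two steps are fine (though in step two the relevant class number is that of the order $\Z[\sqrt{-7}]$ of discriminant $-28$, not of the maximal order of $\Q(\sqrt{-7})$; both equal $1$, so the conclusion $G_p=x^2+7y^2$ stands). The gap is in step three. The hypothesis of Proposition \ref{drem3} that you defer, namely $R_2(\Q(\sqrt{-7}))(\sqrt{2})=H(\Q(\sqrt{-14}))$, is not merely unverified but false as stated: $2$ splits in $\Q(\sqrt{-7})$ and $h(\Q(\sqrt{-7}))=1$, so the ray class group of modulus $(2)$ is trivial and $R_2(\Q(\sqrt{-7}))(\sqrt{2})=\Q(\sqrt{2},\sqrt{-7})$ has degree $4$ over $\Q$, whereas $H(\Q(\sqrt{-14}))$ has degree $8$. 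So Proposition \ref{drem3} cannot be invoked for $d=7$, and the equivalence $G_p=x^2+7y^2\iff G_p=x_1^2+14y_1^2$ is not available to you.

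More fundamentally, no argument of the shape you propose can work, because everything you use about $G_p$ is a congruence condition ($G_p\equiv 1\pmod 8$ and $G_p\equiv 1,2,4\pmod 7$), while complete splitting in $H_4$ is not a congruence condition: $H_4=H(\Q(\sqrt{-14}))$ strictly contains the genus field $\Q(\sqrt{2},\sqrt{-7})$ of $S$. Concretely, $G_7=113=1^2+7\cdot 4^2$ satisfies every congruence in your argument, yet $113=2\cdot 5^2+7\cdot 3^2$ lies in the order-two ideal class of $\Q(\sqrt{-14})$, is not of the form $x^2+14y^2$, and does not split completely in $H_4$; since nothing in your proof distinguishes $p=7$ from $p>7$, it would "prove" a false statement for $p=7$. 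The paper's own proof is a one-line appeal to Theorem \ref{th-split}, whose essential extra input is the divisibility of $y$ by a power of $2$ supplied by Lemma \ref{mainlemma}; it is this divisibility, fed through the Artin symbol computation, that forces the class of $(x+y\sqrt{-7})$ to be trivial. Your proposal never uses Lemma \ref{mainlemma} or any divisibility of $x$ or $y$, so the decisive ingredient is missing.
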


 Proof easily follows from Theorem \ref{th-split}.

 From Lemma~\ref{mainlemma} and Theorem~\ref{mainprop} it is clear
that  $G_{p}$ splits completely in the cyclic extension of
$\Q(\sqrt{-14})$. 
Now we are ready to prove our main result that, for  $p>7$ and  $p\equiv\pm 1\pmod{3}$ in the representation of $G_p$ as
$G_p=x^2+7y^2$ for some $x,y\in \Z$,  $y$ is divisible by $8$.
\begin{theorem} \label{mainth}
 Let $p>7$ and $G_p=2^{p}-(\frac{2}{p})2^{\frac{p+1}{2}}+1$ be a Gaussian Mersenne prime. Then for $p\equiv\pm 1\pmod{3}$
 the form $G_p=x^2+7y^2$ exists for some $x,y\in \Z$ and $y$ is divisible by $8$.
\end{theorem}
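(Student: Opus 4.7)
The plan is to follow the Artin-reciprocity template set up at the end of Section~2 with $N=8$ and $d=7$, combining the two ingredients already in hand. Lemma~\ref{mainlemma} gives $x\equiv\pm1\pmod{8}$ and $4\mid y$, so the task reduces to upgrading $4\mid y$ to $8\mid y$. Theorem~\ref{mainprop} supplies that $G_p$ splits completely in the cyclic quartic unramified extension $H_4$ of $S=\Q(\sqrt{-14})$.

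The central step runs as follows. Let $K=\Q(\sqrt{-7})$, let $K_1$ be one of the two conjugate quartic subfields of $H_4$ over $K$, and let $\pi=(x+y\sqrt{-7})$ be the principal prime of $\mathcal{O}_{K}$ above $G_p$. Since $G_p$ splits completely in $H_4$, it splits in $K_1$, and therefore $\left(\frac{\pi}{K_1/K}\right)=1$. I would then invoke the chain
\[
(\Z/8\Z)^{*}\;\simeq\;(\mathcal{O}_{K}/\Delta)^{*}\;\twoheadrightarrow\; I_{K}(\Delta)/P_{K,1}(\Delta)\;\twoheadrightarrow\; \mathrm{Gal}(K_1/K)
\]
from Section~2. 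Since $[K_1:K]=2$, the kernel of this composition is an index-two subgroup of $(\Z/8\Z)^{*}=\{1,3,5,7\}$. I would pin the kernel down as $\{\pm1\}\pmod{8}$ by exhibiting a small rational prime $q$ that is inert in both $J=\Q(\sqrt{2})$ and $K$ but splits in $S$: such a $q$ necessarily satisfies $q\equiv 3$ or $5\pmod{8}$ and has non-trivial Frobenius on $K_1$, which rules out the other two index-two subgroups.

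Putting the pieces together, the triviality of the Artin symbol forces the class of $x+y\sqrt{-7}$ modulo $\Delta$ to lie in $\{\pm1\}\pmod{8}$. By Lemma~\ref{mainlemma} that class reduces to one of $\pm1$ or $\pm1+4\sqrt{-7}\pmod{8}$, and only the first two lie in the kernel, yielding $8\mid y$. The main obstacle is the careful bookkeeping needed to identify the kernel of $(\Z/8\Z)^{*}\twoheadrightarrow \mathrm{Gal}(K_1/K)$, in particular making precise the isomorphism $(\Z/8\Z)^{*}\simeq(\mathcal{O}_{K}/\Delta)^{*}$ and exhibiting the auxiliary prime $q$; the existence of such $q$ is guaranteed by Chebotarev density, and the tabulated values (for instance $G_{47}=5732351^{2}+7\cdot 3925696^{2}$ with $8\mid 3925696$) confirm that the identification of the kernel is consistent with the numerical data.
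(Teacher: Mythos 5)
Your proposal follows essentially the same route as the paper's own proof: reduce to upgrading $4\mid y$ to $8\mid y$ via Lemma~\ref{mainlemma}, use Theorem~\ref{mainprop} to get $\left(\frac{\pi}{K_1/K}\right)=1$ for $\pi=(x+y\sqrt{-7})$, push this through the chain $(\Z/8\Z)^{*}\simeq(\mathcal{O}_{K}/\Delta)^{*}\twoheadrightarrow P_{K}(\Delta)/P_{K,1}(\Delta)\twoheadrightarrow \mathrm{Gal}(K_1/K)$ with $\Delta=\Delta(K_1/K)=(2,\omega)^{3}$, and observe that $x\equiv\pm1\pmod{8}$ together with $y\equiv 4\pmod{8}$ would put $\pi$ outside the kernel.

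The one step that does not close as written is your identification of the kernel. A single auxiliary prime $q$ inert in $J=\Q(\sqrt{2})$ and $K$ and split in $S$ lies in exactly one of the classes $3$ or $5\bmod 8$, so it establishes surjectivity onto $\mathrm{Gal}(K_1/K)$ and excludes exactly one of the three index-two subgroups of $(\Z/8\Z)^{*}$ as the kernel --- not ``the other two'' as you claim; you are still left with two candidates. The paper closes this with the cheap observation that $\{\pm1\}$ already lies in the kernel of $(\mathcal{O}_{K}/\Delta)^{*}\to P_{K}(\Delta)/P_{K,1}(\Delta)$, since any $\alpha\equiv-1\pmod{\Delta}$ generates the same ideal as $-\alpha\equiv 1\pmod{\Delta}$; combined with surjectivity (witnessed by the single prime $q=3$, which is inert in both $J$ and $K$ and splits in $S$), the kernel is forced to be $\{\pm1\}$. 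Alternatively you could exhibit two auxiliary primes, one in each class $3$ and $5\bmod 8$. You should also carry out the ``bookkeeping'' you defer: under the ring homomorphism $\mathcal{O}_{K}\to\Z/8\Z$, $a+b\omega\mapsto a-2b$, one has $\sqrt{-7}=2\omega-1\mapsto 3$, so $\pm1+4\sqrt{-7}\mapsto\pm1+4\in\{3,5\}$, which indeed avoids $\{\pm1\}$; this is precisely the computation with which the paper finishes.
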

\begin{proof}
Taking $J=\Q(\sqrt{2})$, and $K=\Q(\sqrt{-7})$, we have $S=\Q({\sqrt{-14}})$ and 
 $\omega=\frac{1+\sqrt{-7}}{2}$. Let $\bar \omega$  be the conjugate of
 $\omega$ then $\omega\bar\omega=2$.
$2$ splits completely in $K$ as $2=(2,\omega)(2,\bar\omega)$ and  
$\Delta(K_1/K) \cdot \Delta(K_2/K)=8$.
Since the discriminants are coprime and $K_1$ and $K_2$  are conjugates, $\Delta(K_1/K)=(2,\omega)^{3}$ and
$\Delta(K_2/K)=(2,\bar\omega)^{3}$. Now, $\Delta(H_4/\Q)=8^2$ and $\Delta(K_i/K)|8^2$.
Let $\Delta=\Delta(K_1/K)$. Then there is a
surjective group homomorphism 
\[\mathrm{Cl}_{\Delta}(K)\twoheadrightarrow \mathrm{Gal}(K_2/K).\]
The group $I_{K}{(\Delta)}/P_{K,1}(\Delta)=P_{K}(\Delta)/P_{K,1}(\Delta)$ is a subgroup of $\mathrm{Cl}_{\Delta}(K)$ and 
contains the principal prime ideal $\pi=(x+y\sqrt{-7})$ of $K$.
Any prime of $\Q$, which is inert in both $J$  and $K$ splits in $S$. For example, since $7\equiv1\pmod{3}$,
the prime $3$ of $\Q$ is inert in $J$ and $K$ and splits in $S$. Thus
the decomposition field of a prime $\mathfrak{p}_3$ in $H_4$ above $3$ is $S$. 
The prime $3\mathcal{O}_{K}$ of $K$ does not split in $K_1$. Hence, from the above discussion,
$P_{K}(\Delta)/P_{K,1}(\Delta)$ maps surjective on $\mathrm{Gal}(K_1/K)$. 
The map from $(\mathcal{O}_{K}/\Delta)^{*}$ to $P_{K}(\Delta)/P_{K,1}(\Delta)$
is surjective and  the group $(\mathcal{O}_{K}/\Delta)^{*}$ is isomorphic to the group
$(\Z/8\Z)^{*}$. We have the following group homomorphism,
\[(\Z/8\Z)^{*}\simeq(\mathcal{O}_{K}/\Delta)^{*}\twoheadrightarrow P_{K}(\Delta)/P_{K,1}(\Delta)\twoheadrightarrow \mathrm{Gal}(K_1/K).\]
Now $\{\pm 1\}$ is contained in the kernel of the $(\mathcal{O}_{K}/\Delta)^{*}\twoheadrightarrow P_{K}(\Delta)/P_{K,1}(\Delta)$ and this map is surjective hence,
the kernel of $(\mathcal{O}_{K}/\Delta)^{*}\rightarrow P_{K}(\Delta)/P_{K,1}(\Delta)$ is $\{\pm 1\}$. Thus $P_{K}(\Delta)/P_{K,1}(\Delta)$ has two elements.
So,\[(\Z/8\Z)^{*}\simeq(\mathcal{O}_{K}/\Delta)^{*}\rightarrow P_{K}(\Delta)/P_{K,1}(\Delta)\simeq \mathrm{Gal}(K_1/K).\]
From Theorem~\ref{mainprop} it is clear that $G_{p}$ splits
completely in $H_4$ and $H_4$ contains $\sqrt{2}$. 
For any prime ideal $\pi$ of $K$, $\left(\frac{K_1/K}{\pi}\right)=1$. Hence, $\pi=x+y\sqrt{-7}$ is identity in $P_{K}(\Delta)/P_{K,1}(\Delta)$.
Thus,
\begin{equation}
  \label{maineq}x+y\sqrt{-7}\equiv\pm 1\bmod{\Delta}.
\end{equation}
From Lemma~\ref{mainlemma}, we have $4|y$. We assume that, $p>7$, so
$G_{p}\equiv 1\pmod{32}$. Since, $4|y$ and $x\equiv\pm 1\pmod{8}$, we have,
\[x^2+7y^2\equiv 1\pmod{16}\] and \[ (y\cdot\sqrt{-7})^{2}\equiv 0\pmod{16}.\]
That is $y\cdot\sqrt{-7}\equiv 0 \text{ or }4\pmod{8}$. 
 If $y\cdot\sqrt{-7}\equiv   4\bmod{8}$, then there is a contradiction to 
 $\pi\equiv x+y\sqrt{-7}\equiv\pm 1\bmod{\Delta}$ in \eqref{maineq}
 and to the splitting of $\pi$ in $H_4$.  Hence, \begin{equation}\label{eq1}
                                                  y\cdot\sqrt{-7}\equiv 0  \bmod{8}   
                                                 \end{equation}
Now, for    $K=\Q(\sqrt{-7})$, let $\mathcal{O}_K$ be its ring of integers. The element   $\omega=\frac{1+\sqrt{-7}}{2}$ is an
integer of $K$, which  is a zero of the polynomial $X^2-X+2$.
As $-2\in \Z/8\Z$ satisfies $X^2-X+2=0$, we obtain a  ring homomorphism  
\[ \mathcal{O}_{K}\mapsto \Z/8\Z \text{ mapping }
 a+b\omega\mapsto (a-2b)\bmod{8}.\] 
Now $\sqrt{-7}=2\omega-1$ maps to $3$, confirming $y\equiv 0\pmod{8}$ from equation \ref{eq1}. This completes the proof.
\end{proof}
  
\section{ Eisenstein Mersenne primes in the form  $x^2+3y^2$}
For a positive integer $n$, the $n$th Eisenstein Mersenne norm is defined by $E_n=N((1-\omega)^{n}-1)$, where $\omega$ is a cube root of $1$.
If for a prime suffix $p$, $E_p$ is a rational prime, then $E_p$ is called a Eisenstein Mersenne prime, 
we have $E_p=3^{p}-\left(\frac{3}{p}\right)3^{\frac{p+1}{2}}+1$. 
Here $\left(\frac{\cdot}{p}\right)$ is the Legendre symbol $\bmod{p}$.
For $p\equiv 1\pmod{6}$,  Eisenstein Mersenne primes are represented in the form $x^2+3y^2$, then $x$ leaves the remainder
 $6\pmod{7}$  and $y$ is divisible by $7$.
Here we list first few  Eisenstein Mersenne primes as $x^{2}+3y^2$\\
  $E_7=2269=41^{2}+3\cdot 14^{2}$\\
  $E_{19}= 1162320517=  29525^{2}+3\cdot9842^{2}$\\
  $E_{79}=49269609804781974450852068861184694669=\\
  6078832729528464401^{2}+3\cdot2026277576509488134^{2}$\\
   In the above representation we notice that, $41,29525, 6078832729528464401$ are congruent to $6\bmod{7}$ and  
  $14, 9842, 2026277576509488134$ are divisible by $7$.
  These results can be proved not without much effort similarly as discussed for Gaussian Mersenne primes, 
  by constructing unramified cyclic quartic extensions of $\Q(\sqrt{-21})$.
\begin{theorem}
 Let $p\geq7$ and $E_p$ be an Eisenstein Mersenne prime. Then for $p\equiv 1\pmod{6}$
 the form $E_p= x^2+3y^2$ exists. Then there exists a cyclic extension $H_4$ of
$S=\Q(\sqrt{-21})$ with $[H_4:S]=4$, $H_4\subset H(S)$ and
$\sqrt{7}\in H_4$ and  $E_p$ splits completely in $H_4$.
\end{theorem}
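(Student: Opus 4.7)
The plan is to follow the pattern of Theorem \ref{mainprop}, substituting the Eisenstein data $d=3$, $N=7$, $K=\Q(\sqrt{-3})$, $J=\Q(\sqrt{7})$, $S=\Q(\sqrt{-21})$, and then invoking Theorem \ref{th-split} to conclude complete splitting of $E_p$ in $H_4$.

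First I would verify the representation $E_p=x^2+3y^2$. From $E_p = 3^p - \left(\frac{3}{p}\right)3^{(p+1)/2}+1$ one reads off $E_p\equiv 1\pmod 3$, and since $E_p$ is a rational prime larger than $3$, the classical theory of primes represented by the principal form of discriminant $-12$ (trivial form class group) yields $E_p = x^2+3y^2$.

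Next, I would produce $H_4$. Writing $-21 = m\cdot n$ with $m=-3\equiv 1\pmod 4$ and $n=7$, an elementary reciprocity check gives $\left(\frac{-3}{7}\right)=1$, so $7$ is a splitting prime of $\Q(\sqrt{-3})$ and the necessary conditions of Theorem \ref{vau1} are met. Combined with the identity $2^2-7\cdot 1^2=-3\equiv 1\pmod 4$, the Vaughan-style construction invoked in Section~2 produces an unramified quartic extension $H_4$ of $S$ with $[H_4:S]=4$, $H_4\subset H(S)$, and $\sqrt{7}\in H_4$.

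With $H_4$ in hand, the splitting of $E_p$ in $H_4$ is the direct conclusion of Theorem \ref{th-split} applied to the family $\{E_p\}$: the principal prime above $E_p$ in $K$ factors as $\pi = x+y\sqrt{-3}$, one writes $-3 = t_p\tilde{t_p}$ in $\mathcal{O}_J$ with both factors totally positive and congruent to $1$ modulo the relevant discriminants, and the Artin-map argument yields $\left(\frac{t_p}{J_i/J}\right)=\left(\frac{\tilde{t_p}}{J_i/J}\right)=1$ for the two conjugate extensions $J_1,J_2\subset H_4$. Since $\mathrm{Gal}(H_4/J)\cong\mathrm{Gal}(J_1/J)\times\mathrm{Gal}(J_2/J)$, complete splitting follows.

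The main obstacle is discharging the modular-positivity conditions on $t_p,\tilde{t_p}$ required in the Artin step. In the Gaussian setting these were underwritten by Lemma \ref{mainlemma} ($4\mid y$ and $x\equiv\pm 1\pmod 8$); in the Eisenstein case the analogous elementary check $E_p\equiv 1\pmod 7$, obtained by reducing $E_p\pmod 7$ using $3^6\equiv 1\pmod 7$ and tracking $\left(\frac{3}{p}\right)$ according to $p\pmod{12}$, must be carried out before Theorem \ref{th-split} can be cleanly applied.
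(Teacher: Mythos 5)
Your proposal is correct and follows essentially the same route as the paper: the paper's entire proof of this theorem is the single line ``Proof easily follows from Theorem \ref{th-split},'' and you invoke exactly that theorem with the substitutions $d=3$, $N=7$, $K=\Q(\sqrt{-3})$, $J=\Q(\sqrt{7})$, $S=\Q(\sqrt{-21})$. The extra details you supply (the splitting of $7$ in $\Q(\sqrt{-3})$, the identity $2^2-7\cdot 1^2=-3\equiv 1\pmod 4$ feeding the Vaughan construction, and the congruence checks needed before the Artin step) are all consistent with the framework the paper sets up in Section 2, so this is the same argument written out more carefully rather than a different one.
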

  \begin{proof}
   Proof easily follows from Theorem \ref{th-split}.
  \end{proof}

  \begin{theorem}
   Let $p\geq7$ and $E_p$ be an Eisenstein Mersenne prime. Then for $p\equiv 1\pmod{6}$
 the form $E_p= x^2+3y^2$ exists. Then $y$ is divisible by $7$.
  \end{theorem}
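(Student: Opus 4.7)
\emph{The plan} is to mirror the proof of Theorem~\ref{mainth} with the substitution $(J,K,N,d)=(\Q(\sqrt{7}),\Q(\sqrt{-3}),7,3)$, exploiting the cyclic quartic unramified extension $H_{4}$ of $S=\Q(\sqrt{-21})$ just constructed. First I would record the splitting of $N=7$ in $\mathcal{O}_{K}=\Z[\omega]$ (with $\omega$ a primitive cube root of unity): since $\bigl(\tfrac{-3}{7}\bigr)=1$, one has $7=\pi\bar\pi$ in $\mathcal{O}_{K}$ with, e.g., $\pi=(2+\sqrt{-3})$. The Vaughan-Lee relation $2^{2}-7\cdot 1^{2}=-3$ supplies the explicit generator $\alpha=2+\sqrt{7}\in J$ with $\alpha\alpha'=-3$ and $H_{4}=K(\sqrt{\alpha})$. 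The same ramification argument as in the Gaussian case gives conjugate quadratic subextensions $K_{1},K_{2}\subset H_{4}$ over $K$ (distinct from $JK$) whose discriminants $\Delta(K_{1}/K)$ and $\Delta(K_{2}/K)$ are coprime and supported at $\pi$, $\bar\pi$ respectively.

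Setting $\Delta:=\Delta(K_{1}/K)$, Artin reciprocity yields the surjective chain
\[
(\mathcal{O}_{K}/\Delta)^{*}\twoheadrightarrow P_{K}(\Delta)/P_{K,1}(\Delta)\twoheadrightarrow\mathrm{Gal}(K_{1}/K),
\]
whose right-hand surjectivity is witnessed by any rational prime inert in both $J$ and $K$ (and hence split in $S$ with decomposition field $S$ inside $H_{4}$, so inert in $K_{1}$). The prime $q=5$ works, since $\bigl(\tfrac{5}{7}\bigr)=\bigl(\tfrac{-3}{5}\bigr)=-1$. By the preceding theorem $E_{p}$ splits completely in $H_{4}$, so the Artin symbol of the principal prime $(x+y\sqrt{-3})$ of $K$ is trivial and one obtains
\[
x+y\sqrt{-3}\equiv u\pmod{\Delta}
\]
for some $u\in\mathcal{O}_{K}^{*}=\mu_{6}$. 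Reducing via the ring homomorphism $\mathcal{O}_{K}\twoheadrightarrow\mathcal{O}_{K}/\pi\simeq\F_{7}$ (under which $\sqrt{-3}\mapsto\pm 2$) converts this to a linear relation between $x$ and $y$ modulo $7$, which --- combined with a preliminary modular arithmetic lemma that pins down $x\bmod 7$ (analogous to Lemma~\ref{mainlemma}, and consistent with the numerical observation $x\equiv 6\pmod{7}$) --- forces $7\mid y$.

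\emph{The main obstacle} distinguishing this proof from the Gaussian one is the size of the unit group. In the Gaussian case, $\mathcal{O}_{\Q(\sqrt{-7})}^{*}=\{\pm 1\}$ injected into $(\mathcal{O}_{K}/\pi^{3})^{*}\simeq(\Z/8\Z)^{*}$ and cut out a quotient of order exactly $2$, matching $|\mathrm{Gal}(K_{1}/K)|$. Here, by contrast, $\mu_{6}$ already surjects onto $\F_{7}^{*}$ modulo $\pi$, so the first-power congruence $x+y\sqrt{-3}\equiv u\pmod{\pi}$ is vacuous: every residue class modulo $\pi$ is a unit times an element of $\mu_{6}$. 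One must therefore work with a genuinely higher modulus $\Delta$ (likely involving at least $\pi^{2}$, or an auxiliary prime of residue characteristic $2$) and carefully identify the image of $\mu_{6}$ in $(\mathcal{O}_{K}/\Delta)^{*}$ to extract a nontrivial constraint. Pinning down this modulus precisely, and matching it to an auxiliary congruence for $E_{p}$ modulo the corresponding power, is the step on which the detailed arithmetic of the proof hinges.
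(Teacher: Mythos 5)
Your strategy is the paper's own: run the Gaussian Mersenne argument verbatim for $(J,K)=(\Q(\sqrt{7}),\Q(\sqrt{-3}))$, use the splitting of $E_p$ in $H_4$ to make the Artin symbol of $(x+y\sqrt{-3})$ in $\mathrm{Gal}(K_1/K)$ trivial, deduce $x+y\sqrt{-3}\equiv u\pmod{\Delta}$ for a unit $u$, and read off the conclusion modulo a prime above $7$. You have also correctly put your finger on the one point where this case genuinely differs from the Gaussian one: $\mathcal{O}_K^{*}=\mu_6$ surjects onto $\F_7^{*}$, so the congruence at a single prime above $7$ carries no information. (The paper suppresses this issue entirely; it simply writes $x+y\sqrt{-3}\equiv\pm1\pmod{\Delta}$ and ends with ``a simple calculation.'') The problem is that your proposal does not close the gap you identified, and the patch you offer cannot work as stated: there is no ``preliminary modular arithmetic lemma pinning down $x\bmod 7$'' analogous to Lemma~\ref{mainlemma}. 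In the Gaussian case the elementary constraints $x\equiv\pm1\pmod 8$ and $4\mid y$ follow from $G_p\equiv1\pmod{32}$ by inspecting residues; here the only elementary input is $E_p=x^2+3y^2\equiv1\pmod 7$, which admits solutions such as $(x,y)\equiv(3,3)\pmod 7$. A lemma asserting $x\equiv 6\pmod 7$ is already equivalent to the theorem, so your final step is circular.

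The missing ingredient is the component of the conductor at $2$ (this is why the paper works modulo $14$, resp.\ with $\Delta(K_1/K)\Delta(K_2/K)=28$). Since $E_p$ is odd, $x+y$ is odd, and $\sqrt{-3}\equiv 1\pmod 2$, so $x+y\sqrt{-3}\equiv 1$ in $\mathcal{O}_K/(2)\simeq\F_4$; on the other hand $\pm\zeta_6^{\pm1}$ reduce to the nontrivial cube roots of unity in $\F_4^{*}$, while $\pm1$ reduce to $1$. Hence once $(2)\mid\Delta$, the congruence $x+y\sqrt{-3}\equiv u\pmod{\Delta}$ forces $u\in\{\pm1\}$, and the unit ambiguity you worried about collapses. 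One then finishes as the paper implicitly does: with $\sqrt{-3}\equiv 2\pmod{\omega}$ and $\sqrt{-3}\equiv-2\pmod{\bar\omega}$ at the two primes above $7$, the congruences $x+2y\equiv\epsilon_1$ and $x-2y\equiv\epsilon_2\pmod 7$ with $\epsilon_i=\pm1$, together with $(x+2y)(x-2y)=x^2-4y^2\equiv x^2+3y^2\equiv 1\pmod 7$, force $\epsilon_1=\epsilon_2$, whence $x\equiv\pm1$ and $4y\equiv0$, i.e.\ $7\mid y$. Without this (or some equivalent) control of the unit contribution, your argument stops one step short of the theorem.
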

\begin{proof}
 Taking $J=\Q(\sqrt{7})$, and $K=\Q(\sqrt{-3})$, we have $S=\Q({\sqrt{-21}})$ and 
 $\omega=\frac{1+3\sqrt{-3}}{2}$. Let $\bar \omega$  be the conjugate of
 $\omega$ then $\omega\bar\omega=7$. Hence
$7$ splits completely in $K$  and  
$\Delta(K_1/K) \cdot \Delta(K_2/K)=28$.
Now, $\Delta(H_4/\Q)=28^2$ and $\Delta(K_1/K)|\Delta(H_4/\Q)$ and $\Delta(K_2/K)|\Delta(H_4/\Q)$.
Let $\Delta=\Delta(K_1/K)$. Then there is a
surjective group homomorphism:\[\mathrm{Cl}_{\Delta}(K)\twoheadrightarrow \mathrm{Gal}(K_2/K).\] 
The group $I_{K}{(\Delta)}/P_{K,1}(\Delta)=P_{K}(\Delta)/P_{K,1}(\Delta)$ is a subgroup of $\mathrm{Cl}_{\Delta}(K)$ and 
contains the principal prime ideal $\pi=(x+y\sqrt{-3})$ of $K$.
The prime $11$ of $\Q$,  is inert in both $J$  and $K$ hence splits in $S$.   Thus
the decomposition field of a prime $\mathfrak{p_{11}}$ in $H_4$ above $11$ is $S$. 
The prime $11\mathcal{O}_{K}$ of $K$ does not split in $K_1$. Hence,
$P_{K}(\Delta)/P_{K,1}(\Delta)$ maps surjective on $\mathrm{Gal}(K_1/K)$. 
The map from $(\mathcal{O}_{K}/\Delta)^{*}$ to $P_{K}(\Delta)/P_{K,1}(\Delta)$
is surjective and  the group $(\mathcal{O}_{K}/\Delta)^{*}$ is isomorphic to the group
$(\Z/28\Z)^{*}$. We have the following group homomorphism,
\[(\Z/28\Z)^{*}\simeq(\mathcal{O}_{K}/\Delta)^{*}\twoheadrightarrow P_{K}(\Delta)/P_{K,1}(\Delta)\twoheadrightarrow \mathrm{Gal}(K_1/K).\]
For any prime ideal $\pi$ of $K$, $\left(\frac{K_1/K}{\pi}\right)=1$. Hence, $\pi=x+y\sqrt{-3}$ is identity in $P_{K}(\Delta)/P_{K,1}(\Delta)$.
Thus,
\begin{equation}
  x+y\sqrt{-3}\equiv\pm 1\bmod{\Delta}.
\end{equation}
Clearly we obtain \[x^2+3y^2\equiv 1\pmod{14}\quad \forall p\equiv 1\pmod{6}\]
and then \(x^2+3y^2\equiv 1\pmod{2}\)and $x^2+3y^2\equiv 1\pmod{7}$ $\forall p\equiv 1\pmod{6}$. 
Then a simple calculation leads to $x^2\equiv 1\pmod{7}$, hence confirming $y\equiv 0\pmod{7} $. This completes the proof.

\end{proof}

\section{Conclusion}
The problem of representing primes in the form $x^2+dy^2$ is well studied in \cite{Cox:1989:pfct}. 
Here we have found that, if a positive integer $N$ divides either $x$ or $y$ in the representation of 
primes as $x^2+dy^2$, then those primes split completely in the unramified cyclic quartic
extension $H_4$ of $\Q(\sqrt{-Nd})$.  As an example we have shown that
when Gaussian Mersenne primes are represented in the form $x^2+7y^2$, 
$y$ is divisible by $8$. Also, when Eisenstein Mersenne primes are represented in the form $x^2+3y^2$,
$y$ is divisible by $7$. 
\section{Acknowledgement}
The authors thank  the referee for carefully reading through the
manuscript and for helpful suggestions.
Special thanks to Professor B.R. Shankar, Department of MACS, NITK, Surathkal
for his useful comments on the draft.
The second author  sincerely  thanks  National Board for Higher Mathematics (NBHM), 
Government  of India for funding this work through Post Doctoral Research Fellowship. 


\end{document}